\title{The failure of Kodaira vanishing for Fano varieties,
and terminal singularities that are not Cohen-Macaulay}
\author{Burt Totaro}
\date{  }
\def\Z{\text{\bf Z}}
\def\Q{\text{\bf Q}}
\def\P{\text{\bf P}}
\def\F{\text{\bf F}}
\def\N{\text{\bf N}}
\def\arrow{\rightarrow}
\DeclareMathOperator{\Hom}{Hom}
\DeclareMathOperator{\Pic}{Pic}
\DeclareMathOperator{\Spec}{Spec}
\def\red{\text{red}}
\def\supp{\text{supp}}
\def\Fl{\text{Fl}}
\def\Gr{\text{Gr}}
\begin{document}
\maketitle
\newtheorem{theorem}{Theorem}[section]
\newtheorem{corollary}[theorem]{Corollary}
\newtheorem{lemma}[theorem]{Lemma}

\theoremstyle{definition}
\newtheorem{definition}[theorem]{Definition}
\newtheorem{example}[theorem]{Example}

\theoremstyle{remark}
\newtheorem{remark}[theorem]{Remark}

The Kodaira vanishing theorem says
that for an ample line bundle $L$ on a smooth
projective variety $X$ over a field of characteristic zero,
$$H^i(X,K_X+L)=0$$
for all $i>0$. (Here $K_X$ denotes the canonical bundle, and we use
additive notation for line bundles.) This result and its
generalizations are central to the classification
of algebraic varieties. For example,
Kodaira vanishing can sometimes be used to show
that there are only finitely many varieties
with given intrinsic invariants, up to deformation equivalence.
Unfortunately, Raynaud showed that
Kodaira vanishing fails already for surfaces in any
characteristic $p>0$ \cite{Raynaud}.

For the minimal model program (MMP) in positive characteristic,
it has been important to find out whether Kodaira vanishing holds
for special classes of varieties, notably for Fano varieties (varieties
with $-K_X$ ample).
By taking cones, this is related to the question of whether
the singularities arising in the MMP (klt, canonical,
and so on) have the good properties (such as Cohen-Macaulayness
or rational singularities) familiar from characteristic zero.

For example, Kodaira vanishing holds for smooth del Pezzo surfaces
in any characteristic \cite[Theorem II.1.6]{Ekedahl}.
Also, klt surface singularities in characteristic $p>5$ are
strongly F-regular and hence Cohen-Macaulay;
this is the key reason why the MMP
for 3-folds is only known in characteristic $p>5$ (or zero)
\cite[Theorem 3.1]{HX}. There are in fact
some striking counterexamples in characteristics 2 and 3.
Maddock found a regular (but not smooth) del Pezzo surface $X$ over
an imperfect field of characteristic 2 with $H^1(X,O)\neq 0$,
which violates Kodaira vanishing \cite{Maddock}.
And Cascini-Tanaka and Bernasconi found klt
3-folds over algebraically closed fields
of characteristic 2 or 3 which are not Cohen-Macaulay \cite[Theorem 1.3]{CT},
\cite[Theorem 1.2]{Bernasconi}.

So far, the only known example of a smooth Fano variety on which
Kodaira vanishing fails has been a 6-fold in characteristic 2
discovered by Haboush and Lauritzen \cite{HL, LR}.
In this paper, however, we find
that Kodaira vanishing fails for smooth Fano varieties
in every characteristic $p>0$. One family of examples
has dimension 5 for $p=2$
and $2p-1$ for $p\geq 3$
(Theorem \ref{van2p-1}).
(Thus, in characteristics 2 and 3, this is a 5-fold
rather than a 6-fold.)

Our examples are projective homogeneous varieties
with non-reduced stabilizer, as in the Haboush-Lauritzen example.
Projective homogeneous varieties are smooth and rational
in any characteristic $p>0$,
but most of them are not Fano (apart from the familiar flag varieties,
where the stabilizer subgroup is reduced).
A point that seems to have been overlooked
is that certain infinite families of ``nontrivial''
homogeneous varieties are Fano.
We disprove Kodaira vanishing for some of these varieties.

Kov\'acs found
that for the Haboush-Lauritzen Fano 6-fold $X$ in characteristic 2,
Kodaira vanishing fails already for the ample line bundle $-2K_X$;
explicitly, we have $H^1(X,-K_X)\neq 0$ \cite{Kovacs}.
By taking a cone over $X$,
he gave an example of a canonical singularity which is not
Cohen-Macaulay, on a 7-fold in characteristic 2. Yasuda then
constructed quotient singularities of any characteristic $p>0$
which are canonical but not Cohen-Macaulay \cite[Remark 5.3]{HW}.

We find an even better phenomenon
among Fano varieties in every characteristic
$p>2$. Namely, for every $p>2$, there is a smooth Fano variety $X$
in characteristic $p>0$
such that $-K_X$ is divisible by 2, $-K_X=2A$, and Kodaira vanishing
fails for the ample line bundle $3A$; explicitly, we have
$H^1(X,A)\neq 0$ (Theorem \ref{van2p+1}).
Here $X$ has dimension $2p+1$. By taking a cone
over $X$, we give a first example of a terminal singularity
which is not Cohen-Macaulay. Moreover,
we have such examples in every characteristic
$p>2$.

After these results were announced, Takehiko Yasuda showed
that there are also quotient singularities of any characteristic
$p>0$ which are terminal but not Cohen-Macaulay (section
\ref{quotient}). Inspired by Yasuda's examples (which are quotients
by a finite group acting linearly), we construct
a new example in the lowest possible dimension: a terminal
singularity {\it of dimension 3 }over $\F_2$ which is not
Cohen-Macaulay (Theorem \ref{dim3}). Our singularity is the quotient
of a smooth variety by a non-linear action of the group $\Z/2$,
and such quotients should be a rich source of further examples.
The paper concludes with some open questions.

I thank Omprokash Das, John Ottem,
and Takehiko Yasuda for useful discussions.
This work was supported by NSF grant DMS-1701237.

\section{Projective homogeneous varieties with non-reduced
stabilizer group}
\label{homvar}

In this section, we describe the projective homogeneous
varieties with non-reduced stabilizer group. More details
are given by Haboush and Lauritzen \cite{HL}.

Let $G$ be a simply connected split semisimple group over a field
$k$ of characteristic $p>0$. Let $T$ be a split maximal torus
in $G$ and $B$ a Borel subgroup containing $T$. Let $\Phi\subset X(T)$
be the set of roots of $G$ with respect to $T$, and define
the subset $\Phi^{+}$ of positive roots to
be the roots of $G$ that are not roots of $B$.
Let $\Delta$ be the associated set of simple roots.
Choose a numbering of the simple roots,
$\Delta=\{\alpha_1,\ldots,\alpha_l\}$, where $l$ is the rank of $G$
(the dimension of $T$). For each
root $\beta$, there is an associated root subgroup
$U_{\beta}\subset G$ isomorphic to the additive group $G_a$,
the unique $G_a$ subgroup normalized
by $T$ on which $T$ acts by $\beta$.

Let $f$ be a function from $\Delta$ to the set of natural numbers
together with $\infty$. The function $f$ determines
a subgroup scheme $P$ of $G$ containing $B$, as follows.
Every positive root $\beta$ can be written uniquely as a linear
combination of simple roots with nonnegative coefficients.
The {\it support }of $\beta$ means the set of simple roots
whose coefficient in $\beta$ is positive.
Extend the function $f\colon \Delta\arrow \N\cup{\infty}$ to a
function on all the positive roots, by defining
$$f(\beta)=\inf_{\alpha\in \supp(\beta)} f(\alpha).$$

For a natural number $r$,
let $\alpha_{p^r}$ be the subgroup scheme of $G_a$ defined
by $x^{p^r}=0$. Let $P_{\red}$ be the parabolic subgroup
generated by $B$ and the root subgroup $U_{\beta}$ for each simple root
$\beta$ with $f(\beta)=\infty$. Finally, define the subgroup scheme
$P$ (with underlying reduced subgroup $P_{\red}$)
as the product (in any order) of $P$ and the subgroup
scheme $\alpha_{p^r}$ of $U_{\beta}$ for each positive root
$\beta$ with $f(\beta)=r<\infty$. Wenzel showed that if $p\geq 5$,
or if $G$ is simply laced, then
every subgroup scheme of $G$ containing $B$ is of this form
for some function $f$ \cite[Theorem 14]{Wenzel}.
For our purpose, it is enough to use these examples of subgroup
schemes.

The natural surjection
$G/P_{\red}\arrow G/P$ is finite and inseparable. The homogeneous
variety $G/P$
is smooth and projective over $k$ (even though $P$ is not smooth).
Lauritzen showed that
$G/P$ has a cell decomposition over $k$, coming from the Bruhat
decomposition of $G/P_{\red}$ \cite{Lauritzencell}. In particular,
$G/P$ is rational over $k$, and $H^i(G/P,\Omega^j)=0$ for $i\neq j$.

If the function $f$ takes only one value $r$ apart from $\infty$, then $P$
is the subgroup scheme generated by $P_{\red}$ and the $r$th Frobenius
kernel of $G$. In that case, $G/P$ is isomorphic to $G/P_{\red}$
as a variety.
(In terms of such an isomorphism,
the surjection $G/P_{\red}\arrow G/P$ is the $r$th power
of the Frobenius endomorphism of $G/P_{\red}$.)
By contrast, for more general functions $f$,
there can be intriguing differences between the properties of $G/P$
and those of the familiar flag variety $G/P_{\red}$.

The Picard group $\Pic(G/P)$ can be identified with a subgroup
of $\Pic(G/P_{\red})$, or of $\Pic(G/B)=X(T)$, by pullback.
For each root $\alpha$ in $X(T)$, write $\alpha^{\vee}$
for the corresponding coroot in $Y(T)=\Hom(X(T),\Z)$.
Then $X(T)$ has a basis given by the fundamental weights
$\omega_1,\ldots,\omega_l$, which are characterized by:
$$\langle \omega_i,\alpha_j^{\vee}\rangle=\delta_{ij}.$$
The subgroup $\Pic(G/P_{\red})$ is the subgroup generated by the $\omega_i$
with $f(i)<\infty$. And $\Pic(G/P)$ is the subgroup generated
by $p^{f(i)}\omega_i$, for each $i$ with $f(i)<\infty$
\cite[section 2, Corollary 7]{HL}. A line bundle on $G/P$
is ample if and only if its pullback to $G/P_{\red}$ is ample,
which means that the coefficient of $\omega_i$ is positive
for every $i\in\{1,\ldots,l\}$ with $f(i)<\infty$.
Moreover, Lauritzen showed that every ample line bundle
on a homogeneous variety $G/P$ is very ample (that is, it has enough
sections to embed $G/P$ into projective space)
\cite[Theorem 1]{Lauritzenemb}.

The anticanonical bundle of $X=G/P$ is given by
\cite[section 3, Proposition 7]{HL}
$$-K_X=\sum_{\substack{\beta\in\Phi^{+}\\ f(\beta)<\infty}}p^{f(\beta)}\beta.$$

Haboush and Lauritzen showed that $G/P$ is never Fano when
$P_{\red}=B$ and $p\geq 5$, except when the function $f$
is constant, in which case $X$ is isomorphic
to the full flag variety $G/B$
\cite[section 4, proof of Theorem 3]{HL}.
By contrast, inseparable images of some partial flag varieties
can be Fano without being isomorphic to the partial flag variety.
The example we use in this paper is:
let $G=SL(n)$ for $n\geq 3$ over a field $k$ of characteristic
$p>0$. As is standard, write the weight lattice
as $X(T)=\Z\{L_1,\ldots,L_n\}
/(L_1+\cdots+L_n=0)$ \cite[section 15.1]{FH}.
The fundamental weights are given by $\omega_i=L_1+\cdots+L_i$,
and the simple roots are $\alpha_i=L_i-L_{i+1}$,
for $1\leq i\leq n-1$. Thus $\alpha_i=-\omega_{i-1}+2\omega_i-\omega_{i+1}$
for $1\leq i\leq n-1$, with the convention that $\omega_0$ and $\omega_n$
are zero.
Let $P$ be the subgroup scheme associated to the function
$$f(i)=\begin{cases} 1&\text{if }i=1.\\
0&\text{if }i=2\\
\infty &\text{if }3\leq i\leq n-1.
\end{cases}$$
Then $G/P_{\red}$ is the flag manifold $\Fl(1,2,n)$, of dimension
$2n-3$ and Picard number 2,
and so $X:=G/P$ also has dimension $2n-3$ and Picard number 2.
By the formula above, $X$ has anticanonical bundle
\begin{align*}
-K_X&=p(L_{1}-L_2)+\sum_{i=3}^{n}(L_2-L_i)
+\sum_{i=3}^{n}(L_1-L_i)\\
&=(p+n-2)L_1+(-p+n-2)L_2-2(L_3+\cdots+L_{n})\\
&=2p\, \omega_1+(n-p)\omega_{2}.
\end{align*}
Thus $-K_X$ is ample if and only if $2p>0$ and $n-p>0$,
which means that $p<n$.

\section{Terminal cones that are not Cohen-Macaulay}

\begin{theorem}
\label{van2p+1}
Let $p$ be a prime number at least 3. Then there is a smooth
Fano variety $X$ over $\F_p$ such that $-K_X$ is divisible by 2,
$-K_X=2A$, and $H^1(X,A)\neq 0$. Here $X$ has dimension $2p+1$
and Picard number 2.

Moreover, for $p\geq 5$, the Euler characteristic $\chi(X,A)$ is negative.
\end{theorem}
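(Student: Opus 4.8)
The plan is to compute $\chi(X,A)$ exactly and then determine its sign. Write $n=p+2$, so that $\dim X=2p+1$ and, by the anticanonical formula, $-K_X=2p\,\omega_1+2\,\omega_2=2A$ with $A=p\omega_1+\omega_2$. The structural input I would first establish is that the inseparable cover $\Fl(1,2,n)=G/P_{\red}\to G/P=X$ is the relative Frobenius of a $\P^1$-bundle: forgetting the line $V_1$ inside the plane $V_2$ gives $\rho\colon X\to B:=\Gr(2,n)$ with $\P^1$ fibers, and since $\Fl(1,2,n)=\P(S^*)$ for the tautological rank-$2$ subbundle $S$ on $B$, the $\alpha_p$ in the definition of $P$ makes $X=\P(F^*S^*)$, where $F$ is the Frobenius of $B$. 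In this description $\mathcal O_X(1)$ is the class $p\omega_1$ (as $F^*\mathcal O(1)=\mathcal O(p)$ on fibers), while $\omega_2$ is the pullback of $\det S^*=\mathcal O_{\Gr}(1)$.

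First I would push $A$ down to $B$. Since $A$ restricts to $\mathcal O(1)$ on each fiber, $R^{>0}\rho_*A=0$ and $\rho_*A=(S^*)^{(p)}\otimes\det S^*=:E$, a rank-$2$ bundle, where $(-)^{(p)}=F^*(-)$. Hence $\chi(X,A)=\chi(B,E)$. The twist scales Chern roots by $p$: if $a,b$ are the Chern roots of $S^*$, then $\operatorname{ch}(E)=e^{(p+1)a+b}+e^{a+(p+1)b}$.

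Next I would convert this to Weyl-character data by moving back up to $Y=\Fl(1,2,n)$. On $Y=\P(S^*)$ the splitting principle is geometric: the tautological quotient realizes $a$ and $b$ as the honest classes $\omega_1$ and $\omega_2-\omega_1$, so the two Chern roots of $E$ become the line bundles $M_1=p\omega_1+\omega_2$ and $M_2=-p\omega_1+(p+1)\omega_2$ on $Y$. Grothendieck--Riemann--Roch along the $\P^1$-bundle $q\colon Y\to B$, together with $q_*\operatorname{td}(T_{Y/B})=1$, gives $\operatorname{ch}(E)=\operatorname{ch}(Rq_*M_1)+\operatorname{ch}(Rq_*M_2)$, whence
$$\chi(X,A)=\chi(Y,M_1)+\chi(Y,M_2).$$
A sanity check: $M_1$ is fiberwise $\mathcal O(p)$ (rank $p+1$) and $M_2$ is fiberwise $\mathcal O(-p)$ (virtual rank $-(p-1)$), summing to $2=\operatorname{rk}E$. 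Each term is an Euler characteristic of a line bundle on a flag variety, hence equals $\chi(G/B,\cdot)$ and is given in every characteristic by the Weyl character formula.

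Finally I would evaluate. The weight $M_1$ is dominant, so $\chi(Y,M_1)=\dim V(M_1)>0$, while $M_2$ lies one reflection from the dominant chamber, so $\chi(Y,M_2)=-\dim V(\nu)<0$ for an explicit $\nu$. Writing both through the Vandermonde form of the dimension formula, the tuples $\lambda+\rho$ differ only in their first two entries $(2p+2,p+1)$ versus $(p+2,2p+1)$ and share the tail $(p-1,\dots,0)$; the common denominator cancels and the sign of $\chi(X,A)$ reduces to that of an explicit difference of products of factorials. One finds $\chi(X,A)<0$ precisely when $(p-1)(p+2)^2>4(p+1)^2$, that is $p^3-p^2-8p-8>0$, which holds for all $p\geq 4$ and so for every prime $p\geq 5$ (and fails at $p=3$, consistent with the hypothesis). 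The main obstacle is the structural step: pinning down $X\cong\P(F^*S^*)$ and hence $\rho_*A\cong E$ from the non-reduced group scheme $P$, because it is exactly the Frobenius twist—scaling the Chern roots by $p$—that lets the negative Weyl term overtake the positive one once $p$ is large enough.
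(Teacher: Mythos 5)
Your Euler-characteristic computation is essentially correct and agrees with the paper's: your Chern roots $(p+1)a+b$ and $a+(p+1)b$ are exactly the weights $\lambda=p\omega_1+\omega_2$ and $\lambda-p\alpha_1$ that the paper extracts from the $B$-module structure of $\gamma^*\pi_*A$, the reflection argument for the sign of $\chi(G/B,\lambda-p\alpha_1)$ matches the paper's use of Andersen's theorem at the level of Euler characteristics, and the final inequality $(p-1)(p+2)^2>4(p+1)^2$ for $p\geq 5$ is the paper's. The structural step you flag as the main obstacle (identifying $X$ as the Frobenius twist of $\Fl(1,2,n)\to\Gr(2,n)$ and computing $\rho_*A$) is exactly the Haboush--Lauritzen computation the paper cites, so that is fine. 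However, there are two genuine gaps. First, the theorem asserts $H^1(X,A)\neq 0$ for \emph{all} primes $p\geq 3$; only the clause $\chi(X,A)<0$ is restricted to $p\geq 5$. Your parenthetical ``fails at $p=3$, consistent with the hypothesis'' misreads the statement: at $p=3$ one has $\chi(X,A)=224-175=49>0$, so the Euler-characteristic method proves nothing, and a separate argument is required. The paper handles $p=3$ by showing that the $G$-linear map $H^0(G/B,\lambda)\to H^0(G/B,\lambda-\alpha)$ arising from the four-term exact sequence annihilates the simple socle $L(\lambda)$ (the weight $\lambda$ does not occur in the target), and that $\dim L(\lambda)=\binom{5}{2}\cdot 5=50$ by the Steinberg tensor product theorem, so the image has dimension at most $174<175$ and the map is not surjective, forcing $H^1(X,A)\neq 0$.

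Second, even for $p\geq 5$, knowing $\chi(X,A)<0$ only shows that some odd-degree cohomology group is nonzero; to conclude $H^1(X,A)\neq 0$ as stated one must also prove $H^i(X,A)=0$ for $i\geq 2$. Your argument works entirely in the Grothendieck group (GRR and splitting of K-theory classes), which cannot see individual cohomology groups. The paper gets the needed vanishing by using the actual extension $0\to k(\lambda-p\alpha)\to M\to k(\lambda)\to 0$ of $B$-modules: both $\lambda$ and $\lambda-\alpha$ are dominant, so Kempf vanishing concentrates their cohomology in degree $0$, and Andersen's isomorphism $H^i(G/B,\lambda-\alpha)\cong H^{i+1}(G/B,\lambda-p\alpha)$ then truncates the long exact sequence to
$$0\to H^0(X,A)\to H^0(G/B,\lambda)\to H^0(G/B,\lambda-\alpha)\to H^1(X,A)\to 0,$$
with $H^i(X,A)=0$ for $i\geq 2$. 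You would need to add this step (it fits naturally into your framework, since your $M_1$ and $M_2$ are the graded pieces of the same extension), and then supply the $p=3$ argument, to obtain the full statement.
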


Thus Kodaira vanishing fails for the ample line bundle $3A$.
These are the first examples of smooth Fano varieties in characteristic
greater than 2 for which Kodaira vanishing fails. Haboush and Lauritzen
exhibited a smooth Fano 6-fold in characteristic 2 for which Kodaira vanishing
fails. (This is \cite[section 6, Example 4]{HL} or \cite[section 2]{LR}.
Both papers give examples of the failure of Kodaira vanishing
in any characteristic, but the variety they consider is Fano
only in characteristic 2.)

In the cases where $\chi(X,A)$ is negative, one amusing consequence
is that the smooth Fano variety $X$ does not lift to characteristic
zero, even over a ramified extension of the $p$-adic integers.
Indeed, given such a lift ${\cal X}$,
$A$ would lift to an ample line bundle on ${\cal X}$ by Grothendieck's
algebraization theorem \cite[Tag 089A]{Stacks}, because
$H^2(X,O)=0$ in this example. But then $\chi(X,A)$ would be nonnegative,
by Kodaira vanishing in characteristic zero.

\begin{corollary}
\label{cone}
Let $p$ be a prime number at least 3. Then there is an isolated terminal
singularity over $\F_p$ which is not Cohen-Macaulay. One can take
its dimension to be $2p+2$.
\end{corollary}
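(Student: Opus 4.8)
The plan is to take the affine cone over the Fano variety $X$ from Theorem \ref{van2p+1} with respect to the ample line bundle $A$ (where $-K_X = 2A$), and show that this cone is an isolated singularity that is terminal but not Cohen-Macaulay. The general machinery relating properties of projective varieties to the singularities of their cones is standard (see the discussion in the introduction and the Kovács reference), so the work is in verifying that the specific numerical data of $X$ gives exactly terminal singularities while the cohomological failure of Kodaira vanishing obstructs Cohen-Macaulayness.

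\begin{proof}[Proof plan]
First I would form the affine cone
$$Y=\Spec\bigoplus_{m\geq 0}H^0(X,mA).$$
Since $A$ is ample (indeed very ample, by Lauritzen's theorem that ample line bundles on $G/P$ are very ample), $Y$ is a normal affine variety with a single isolated singular point at the vertex, and $Y\setminus\{0\}$ is the total space of the complement of the zero section in the line bundle $-A$ over $X$. The dimension of $Y$ is $\dim X+1=2p+2$, as claimed.

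The key step is the non-Cohen-Macaulay claim. The depth of the local ring of $Y$ at the vertex is governed by the cohomology of the twists $mA$ on $X$ via the standard long exact sequences comparing local cohomology of the cone to $\bigoplus_m H^i(X,mA)$. The failure of Cohen-Macaulayness will follow from the nonvanishing $H^1(X,A)\neq 0$ established in Theorem \ref{van2p+1}, together with the vanishing $H^1(X,mA)=0$ for $m\leq 0$ needed to pin down that the depth is genuinely smaller than the dimension (here the Kodaira-type vanishing in the range $m<0$, or Serre vanishing for $m\gg 0$, must be checked so that the middle cohomology does not accidentally cancel in the Serre computation). Concretely, since $H^1(X,A)\neq 0$ contributes to $H^2_{\{0\}}(Y,O_Y)$ in a degree strictly below $\dim Y$, the ring fails to be Cohen-Macaulay.

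For the terminal claim, I would compute discrepancies on the natural resolution given by blowing up the vertex, whose exceptional divisor is $X$ itself embedded as the zero section. Because $-K_X=2A$, the canonical bundle of $Y$ pulls back with a discrepancy dictated by the ratio between $K_X$ and $A$: writing $K_X=-2A$ and tracking the adjunction for the exceptional divisor $E\cong X$, the single discrepancy coefficient comes out positive, which is exactly the terminal condition. This is where the divisibility $-K_X=2A$ is essential—it forces the discrepancy to exceed zero rather than merely being nonnegative (which would give only canonical). The main obstacle will be organizing the local cohomology computation so that the nonvanishing $H^1(X,A)$ survives all the comparison sequences and is not masked by other cohomology; once that bookkeeping is done, both the terminal and non-Cohen-Macaulay conclusions follow from the numerical data of $X$.
\end{proof}
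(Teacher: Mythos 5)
Your proposal is correct and follows essentially the same route as the paper: form the affine cone over $X$ with respect to $A$, use $A\sim_{\Q}\tfrac{1}{2}(-K_X)$ to get terminality, and use $H^1(X,A)\neq 0$ to defeat Cohen--Macaulayness. The only difference is that the paper simply cites Koll\'ar's ready-made criteria for cones ([3.1] and [3.11] of \emph{Singularities of the minimal model program}) rather than re-deriving them via local cohomology and a discrepancy computation on the blow-up of the vertex; in particular your worry about other cohomology groups ``masking'' the nonvanishing is moot, since the Cohen--Macaulay criterion demands vanishing of \emph{all} $H^i(X,mA)$ with $0<i<\dim X$ (the graded pieces contribute as a direct sum to local cohomology at the vertex), so a single nonzero group already suffices.
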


These were the first examples of terminal singularities
that are not Cohen-Macaulay. After these results
were announced, Yasuda gave lower-dimensional examples,
described in section \ref{quotient}. That in turn
inspired the 3-dimensional example in this paper (Theorem \ref{dim3}).

\begin{proof} (Corollary \ref{cone})
In the notation of Theorem \ref{van2p+1},
let $Y$ be the affine cone over the smooth Fano variety $X$ with respect
to the ample line bundle $A$, meaning
$$Y=\Spec \oplus_{m\geq 0}H^0(X,mA).$$
A cone $Y$ is terminal if and only if the ample line bundle $A$
is $\Q$-linearly equivalent
to $a(-K_X)$ for some $0<a<1$, as is the case here (with $a=1/2$)
\cite[3.1]{Kollarsing}.
Also, a cone $Y$ is Cohen-Macaulay if and only if $H^i(X,mA)=0$
for all $0<i<\dim(X)$ and all $m\in \Z$ \cite[3.11]{Kollarsing}.
Since $H^1(X,A)\neq 0$
and $X$ has dimension $2p+1>1$,
$Y$ is not Cohen-Macaulay.
\end{proof}

\begin{proof} (Theorem \ref{van2p+1})
Let $n=p+2$, and let $X=G/P$ be the smooth projective
homogeneous variety for $G=SL(n)$
over $\F_p$ associated to the function $f$ from
section \ref{homvar}. Thus $X$ has dimension $2n-3=2p+1$, and
\begin{align*}
-K_X&=2p\, \omega_1+(n-p)\omega_{2}\\
&=2p\, \omega_1+2\omega_{2}.
\end{align*}
Because $2p$ and $2$ are positive,
$-K_X$ is ample. Because $\Pic(X)$ has a basis given
by $p\, \omega_1$ and $\omega_2$, $-K_X$ is divisible by 2
in $\Pic(X)$,
$-K_X=2A$, with $A:=p\,\omega_1+\omega_{2}$. As discussed
in section \ref{homvar}, the ample line bundle $A$ on $X$ is in fact
very ample.

In the notation of section \ref{homvar},
let $Q$ be the parabolic subgroup of $G$ associated to the function
$$h(i)=\begin{cases} \infty &\text{if }1\leq i\leq n-1\text{ and }i\neq 2\\
0 &\text{if }i=2.
\end{cases}$$
Because $f\leq h$,
$P$ is contained in $Q$. The morphism $G/P\arrow G/Q$
is a $\P^1$-bundle, and $G/Q$ is the Grassmannian $\Gr(2,n)$.
In more detail, $G/P$ is the Frobenius twist of the obvious
$\P^1$-bundle over this Grassmannian. We analyze the cohomology
of $A$ using this $\P^1$-bundle, as Haboush
and Lauritzen did in a similar situation \cite[section 6]{HL}.

Write $\alpha$ for the simple root $\alpha_{1}$, the one with
$f(\alpha)=1$.
For any line bundle on $G/P$, identified with a weight
$\lambda$, $\langle \lambda,\alpha^{\vee}\rangle$ is a multiple of $p$,
and the degree of $\lambda$ on the $\P^1$ fibers of $G/P\arrow G/Q$
is $\langle \lambda,\alpha^{\vee}\rangle/p$.
For the line bundle $A$, we have
$\langle A,\alpha^{\vee}\rangle=p$, and so $A$ has degree 1 on the fibers
of $G/P\arrow G/Q$.

Consider the commutative diagram
$$\xymatrix@C-10pt@R-10pt{
G/B\ar[r]\ar[rd]_{\gamma} & G/P\ar[d]_{\pi}\\
& G/Q.
}$$
Since the line bundle $A$ has degree 1 on the $\P^1$ fibers
of $G/P\arrow G/Q$, it has no higher cohomology on the fibers,
and so $H^i(G/P,A)\cong H^i(G/Q,\pi_*(A))$ for all $i$. Moreover,
since $h^0(\P^1,O(1))=2$, $\pi_*(A)$ is a vector bundle
of rank 2 on $G/Q$.
Next, the morphism $\gamma\colon G/B\arrow G/Q$ has fibers
isomorphic to $Q/B$, which is a projective homogeneous variety
(with reduced stabilizer groups); in the case at hand, this fiber
is $\P^1$ times the flag manifold of $SL(n-2)$. Therefore,
$H^i(Q/B,O)=0$ for $i>0$, and hence $R\gamma_*(O_{G/B})=
O_{G/Q}$. By the projection formula, it follows
that $H^i(G/Q,\pi_*(A))\cong H^i(G/B,\gamma^*\pi_*A)$
for all $i$. This and the previous isomorphism are isomorphisms
of $G$-modules.

Finally, $\gamma^*\pi_*A$ is a $G$-equivariant rank-2 vector bundle
on $G/B$, and so it corresponds to a 2-dimensional representation $M$ of $B$.
Every representation of $B$ is an extension of 1-dimensional
representations $k(\mu)$, corresponding to weights $\mu$ of $T$. In this case,
a direct computation \cite[section 6, Proposition 2]{HL}
shows that $M$ is an extension
$$0\arrow k(\lambda-p\alpha)\arrow M\arrow k(\lambda)\arrow 0,$$
where $\lambda:=p\,\omega_1+\omega_{2}$ is the weight corresponding
to the line bundle $A$.

As a result, the long exact sequence of cohomology on $G/B$
takes the form:
$$0\arrow H^0(G/B,\lambda-p\alpha)\arrow H^0(G/P,A)\arrow H^0(G/B,\lambda)
\arrow H^1(G/B,\lambda-p\alpha)\arrow \cdots.$$
At this point, we could compute the Euler characteristic
$\chi(G/P,A)=\chi(G/B,\lambda)+\chi(G/B,\lambda-p\alpha)$
by the Riemann-Roch theorem on $G/B$ (essentially the Weyl
character formula), and see that $\chi(G/P,A)$ is negative
for $p\geq 5$. That would suffice to disprove Kodaira vanishing
for $A$. However, we choose to give a more detailed analysis
of the cohomology of $A$, which will apply to the case $p=3$ as well.

Let $\rho\in X(T)$ be half the sum of the positive roots;
this is also the sum of the fundamental weights.
As is standard in Lie theory, consider
the ``dot action'' of the Weyl group $W$ of $G$
on the weight lattice:
$$w\cdot \mu=w(\mu+\rho)-\rho.$$
Let $s_{\beta}\in W$ be the reflection associated to a root
$\beta$.

We use the following result of Andersen on the cohomology
of line bundles on the flag variety
\cite[Proposition II.5.4(d)]{Jantzen}:

\begin{theorem}
\label{andersen}
For a reductive group $G$ in characteristic $p>0$,
a simple root $\beta$, and a weight $\mu$ with $\langle
\mu,\beta^{\vee}\rangle$ of the form $sp^m-1$ for
some $s,m\in \N$ with $0<s<p$, there is an isomorphism
of $G$-modules:
$$H^i(G/B,\mu)\cong H^{i+1}(G/B,s_{\beta}\cdot\mu).$$
\end{theorem}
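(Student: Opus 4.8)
The plan is to reduce the statement to a computation on the $\P^1$-bundle attached to the simple root $\beta$, and ultimately to a fact about rank-one representations. Let $P_\beta\supset B$ be the minimal parabolic subgroup associated with $\beta$, with Levi subgroup $L_\beta$ whose derived group has rank one, and let $\pi\colon G/B\arrow G/P_\beta$ be the resulting $\P^1$-bundle. The line bundle $\mathcal{L}(\mu)$ attached to $\mu$ restricts on each fibre $P_\beta/B\cong\P^1$ to a line bundle of degree $\langle\mu,\beta^\vee\rangle=sp^m-1\geq 0$, while $\mathcal{L}(s_\beta\cdot\mu)$ restricts to degree $\langle s_\beta\cdot\mu,\beta^\vee\rangle=-\langle\mu,\beta^\vee\rangle-2\leq -2$. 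First I would record the two relative-cohomology computations that result: since $H^1(\P^1,O(d))=0$ for $d\geq -1$ and $H^0(\P^1,O(d))=0$ for $d\leq -1$, the sheaf $\mathcal{V}:=R^0\pi_*\mathcal{L}(\mu)$ is a $G$-equivariant vector bundle of rank $sp^m$ with $R^1\pi_*\mathcal{L}(\mu)=0$, whereas $\mathcal{W}:=R^1\pi_*\mathcal{L}(s_\beta\cdot\mu)$ is a $G$-equivariant bundle of the same rank with $R^0\pi_*\mathcal{L}(s_\beta\cdot\mu)=0$.

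With these vanishings the Leray spectral sequence for $\pi$ has a single nonzero row in each case and degenerates, yielding $G$-module isomorphisms $H^i(G/B,\mu)\cong H^i(G/P_\beta,\mathcal{V})$ and $H^{i+1}(G/B,s_\beta\cdot\mu)\cong H^i(G/P_\beta,\mathcal{W})$ for all $i$; the degree shift in the second comes from $\mathcal{W}$ arising as an $R^1$. Consequently the theorem would follow at once from a single statement: that $\mathcal{V}$ and $\mathcal{W}$ are isomorphic as $G$-equivariant vector bundles on $G/P_\beta$. Since such bundles are determined by their fibre over the base point, viewed as a $P_\beta$-module, this is a purely rank-one assertion about $L_\beta$.

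The heart of the matter is therefore the following rank-one computation, which I would carry out for the group $SL(2)$ (or $PGL(2)$) governing $L_\beta$. Writing $n=sp^m-1$, the fibre of $\mathcal{V}$ is the dual Weyl module $\nabla(n)=H^0(\P^1,O(n))$, while relative Serre duality along the fibres (the relative dualizing sheaf is $\mathcal{L}(-\beta)$, of fibre degree $-2$) identifies the fibre of $\mathcal{W}$ with the Weyl module $\Delta(n)\cong\nabla(n)^*$. A short weight bookkeeping shows that both modules carry the weights $\mu,\mu-\beta,\ldots,\mu-n\beta$, each with multiplicity one, so in particular they share the highest weight $\mu$; thus they agree as $P_\beta$-modules as soon as their underlying $L_\beta$-module structures agree. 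I expect the decisive step to be proving $\Delta(n)\cong\nabla(n)$ precisely when $n=sp^m-1$ with $0<s<p$. This is exactly where the numerical hypothesis enters: the base-$p$ expansion of $n$ is $(s-1,p-1,\ldots,p-1)$, so Steinberg's tensor product theorem gives
\[
L(n)=L(s-1)^{[m]}\otimes L(p-1)^{[m-1]}\otimes\cdots\otimes L(p-1),
\]
of dimension $s\,p^m=n+1=\dim\Delta(n)$; hence $\Delta(n)=L(n)=\nabla(n)$ is simple and self-dual. Feeding this back gives $\mathcal{V}\cong\mathcal{W}$ and completes the proof.

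Finally, it is worth noting why the hypothesis cannot be dropped. In general $\mathcal{V}$ and $\mathcal{W}$ have the same character, so $\chi(G/B,\mu)=-\chi(G/B,s_\beta\cdot\mu)$ always holds by the Weyl character formula; but for a general $n$ the modules $\Delta(n)$ and $\nabla(n)$ are non-isomorphic in characteristic $p$, so the upgrade from an equality of Euler characteristics to an isomorphism of cohomology groups genuinely requires $n=sp^m-1$.
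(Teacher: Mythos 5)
The paper does not actually prove this statement --- it is Andersen's theorem, quoted with a citation to Jantzen [Proposition II.5.4(d)] --- so there is no in-paper argument to compare against. Your proof is correct and is essentially the standard one: reduce along the $\P^1$-bundle $\pi\colon G/B\to G/P_\beta$ to the assertion that $\pi_*\mathcal{L}(\mu)$ and $R^1\pi_*\mathcal{L}(s_\beta\cdot\mu)$ are isomorphic as $G$-equivariant bundles, which holds because the rank-one costandard and standard modules of highest weight $sp^m-1$ ($0<s<p$) are both simple, hence coincide, by the Steinberg tensor product theorem and a dimension count (the only step you leave implicit, that simplicity over $L_\beta$ forces the unipotent radical of $P_\beta$ to act trivially and so pins down the $P_\beta$-module, is standard).
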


For the weight $\lambda$ and simple root $\alpha$ considered above,
we have $\langle \lambda-\alpha,\alpha^{\vee}\rangle
=p-2$ and $s_{\alpha}\cdot (\lambda-\alpha)=\lambda-p\alpha$.
Here $p-2$ is in the range where Theorem \ref{andersen} applies,
and so we have
$$H^i(G/B,\lambda-\alpha)\cong H^{i+1}(G/B,s_{\alpha}\cdot(\lambda
-\alpha)$$
Explicitly, $\lambda-\alpha=(p-2)\omega_1+2\omega_{2}$,
which is a dominant weight (like $\lambda$). By Kempf's vanishing theorem,
it follows that the line bundles $\lambda$ and $\lambda-\alpha$
on $G/B$ have cohomology concentrated in degree zero
\cite[Proposition II.4.5]{Jantzen}. Assembling all these results,
we have an exact sequence of $G$-modules:
$$0\arrow H^0(G/P,A)\arrow H^0(G/B,\lambda)\arrow
H^0(G/B,\lambda-\alpha)\arrow H^1(G/P,A)\arrow 0,$$
and $H^i(G/P,A)=0$ for $i\geq 2$.

In view of Kempf's vanishing theorem, the dimensions of
the {\it Schur modules }$H^0(G/B,\lambda)$
and $H^0(G/B,\lambda-\alpha)$ are given by the Weyl dimension formula,
as in characteristic zero. For $SL(n)$,
the formula says \cite[Theorem 6.3(1)]{FH}:
for a dominant weight
$\mu=a_1\omega_1+\cdots+a_{n-1}\omega_{n-1}$,
$$h^0(G/B,\mu)=\prod_{1\leq i<j\leq n}\frac{a_i+\cdots+a_{j-1}+j-i}
{j-i}.$$
We read off that
$$h^0(G/B,\lambda)=\binom{2p+2}{p}(p+1)$$
and
$$h^0(G/B,\lambda-\alpha)=\binom{2p+1}{p}\frac{(p+2)(p-1)}{2}.$$
To compare these numbers, compute the ratio:
$$\frac{h^0(G/B,\lambda-\alpha)}{h^0(G/B,\lambda)}
=\frac{(p-1)(p+2)^2}{4(p+1)^2}.$$
This is greater than 1 if $p\geq 5$ (since then
$p-1\geq 4$). By the previous paragraph's exact sequence,
it follows that the Euler characteristic $\chi(G/P,A)$ 
is negative for $p\geq 5$. Since $A$ has no cohomology
in degrees at least 2, we must have
$H^1(G/P,A)\neq 0$, as we want.

For $p=3$, $h^0(G/B,\lambda)$ is 224 whereas $h^0(G/B,\lambda-\alpha)$
is 175, and so the dimensions would allow
the $G$-linear map $\varphi\colon H^0(G/B,\lambda)
\arrow H^0(G/B,\lambda-\alpha)$ to be surjective. But in fact
it is not surjective (and hence $H^1(G/P,A)$ is not zero),
as we now show.

For a dominant weight $\mu$, write $L(\mu)$ for the simple
$G$-module with highest weight $\mu$.
For a reductive group $G$ in any characteristic and a dominant weight $\mu$,
Chevalley showed
that the socle
(maximal semisimple submodule) of the Schur module $H^0(G/B,\mu)$
is simple, written $L(\mu)$. Moreover, this construction gives a one-to-one
correspondence between the simple $G$-modules and the dominant weights
\cite[Corollary II.2.7]{Jantzen}.

The Steinberg tensor product theorem describes all simple
$G$-modules in terms of those whose highest weight has
coefficients less than $p$ \cite[Corollary II.3.17]{Jantzen}.
In particular, since $\lambda=p\, \omega_1+\omega_{2}$, the theorem
says that
$$L(\lambda)\cong L(\omega_{1})^{[1]}\otimes L(\omega_{2}),$$
writing $M^{[1]}$ for the Frobenius twist of a $G$-module $M$.
For $SL(n)$ in any characteristic,
the simple module associated to the fundamental weight
$\omega_i$ is the exterior power $\Lambda^i(V)$ with $V$ the standard
$n$-dimensional representation \cite[section II.2.15]{Jantzen}.
So $L(\lambda)$ has dimension $\binom{n}{2}n=\binom{p+2}{2}(p+2)$.
Also, because $\lambda>\lambda-\alpha$ in the partial ordering
of the weight lattice given by the positive roots, the weight
$\lambda$ does not occur in the $G$-module $H^0(G/B,\lambda-\alpha)$.
It follows that the $G$-linear map $\varphi\colon
H^0(G/B,\lambda)\arrow H^0(G/B,\lambda-\alpha)$ is zero
on the simple submodule $L(\lambda)$.

For $p=3$, $H^0(\lambda)/L(\lambda)$ has dimension $224-50=174$,
whereas $H^0(\lambda-\alpha)$ has dimension 175. It follows
that $\varphi$ is not surjective. Equivalently, $H^1(G/P,A)\neq 0$,
as we want.
\end{proof}

\section{Lower-dimensional failure of Kodaira vanishing
for Fano varieties}

In this section, we give slightly lower-dimensional
examples of smooth Fano varieties $X$ in any characteristic $p>0$
for which Kodaira vanishing fails: dimension $2p-1$ rather
than $2p+1$ for $p\geq 3$, and dimension 5 for $p=2$.
(In particular, the examples in characteristics 2 or 3 have
dimension 5, which is smaller than the dimension 6 of Haboush-Lauritzen's
smooth Fano variety in characteristic 2 where Kodaira vanishing fails.)
In return for this improvement, we consider ample
line bundles that are not rational multiples of $-K_X$.

\begin{theorem}
\label{van2p-1}
Let $p$ be a prime number. Then there is a smooth
Fano variety $X$ over $\F_p$ and a very ample line bundle $A$
on $X$ such that $H^1(X,K_X+A)\neq 0$. Here $X$ has dimension $2p-1$
for $p\geq 3$ and dimension 5 for $p=2$. Also, $X$ has
Picard number 2.

Moreover, in the examples with $p\neq 3$,
the Euler characteristic $\chi(X,K_X+A)$
is negative.
\end{theorem}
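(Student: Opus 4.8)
The plan is to follow the proof of Theorem \ref{van2p+1} as closely as possible, but starting from the \emph{smaller} group $G=SL(p+1)$ and allowing $A$ to be a line bundle that is not a rational multiple of $-K_X$. For $p\geq 3$ I would take $X=G/P$ with the function $f$ of Section \ref{homvar}, so that $\dim X=2(p+1)-3=2p-1$ and (with $n=p+1$) $-K_X=2p\,\omega_1+\omega_2$. Choose $A$ so that $K_X+A=p\,\omega_1$; since $\Pic(X)$ has basis $p\,\omega_1,\omega_2$ and $A=3p\,\omega_1+\omega_2$ has positive coordinates, $A$ is ample, hence very ample. The weight $\lambda:=p\,\omega_1$ has degree $1$ on the fibers of $G/P\arrow G/Q=\Gr(2,p+1)$, and $\langle\lambda-\alpha,\alpha^\vee\rangle=p-2$ lies in the range of Theorem \ref{andersen}, exactly as the weight $p\,\omega_1+\omega_2$ did before. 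The fibration, Andersen, and Kempf steps then run verbatim and give an exact sequence of $G$-modules
$$0\arrow H^0(X,K_X+A)\arrow H^0(G/B,\lambda)\arrow H^0(G/B,\lambda-\alpha)\arrow H^1(X,K_X+A)\arrow 0$$
with $H^i(X,K_X+A)=0$ for $i\geq 2$, where $\lambda-\alpha=(p-2)\omega_1+\omega_2$; write $\varphi$ for the middle map.

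The two Schur-module dimensions come from the Weyl dimension formula for $SL(p+1)$: $H^0(G/B,\lambda)$ has dimension $\binom{2p}{p}$ (it is $\mathrm{Sym}^p V$), and a short computation gives $h^0(G/B,\lambda-\alpha)/h^0(G/B,\lambda)=(p-1)/2$. For $p\geq 5$ this exceeds $1$, so $\chi(X,K_X+A)<0$; this forces $H^1\neq 0$ and proves the ``moreover'' clause at once. For $p=3$ the ratio equals $1$, so $\chi=0$ and a bare dimension count is inconclusive. Here I would argue as in the $p=3$ case of Theorem \ref{van2p+1}: by the Steinberg tensor product theorem the socle $L(p\,\omega_1)\cong L(\omega_1)^{[1]}$ has dimension $p+1$ and lies in $\ker\varphi$ (its highest weight $p\,\omega_1$ does not occur in $H^0(G/B,\lambda-\alpha)$), so the image of $\varphi$ has dimension at most $\binom{2p}{p}-(p+1)$, which for $p=3$ is $16$, strictly below $h^0(G/B,\lambda-\alpha)=20$. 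Hence $\varphi$ is not surjective and $H^1(X,K_X+A)\neq 0$.

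For $p=2$ this recipe degenerates: $SL(3)$ would give only a $3$-fold, and there the choice $K_X+A=2\,\omega_1$ has $H^1=0$ (the relevant map $\mathrm{Sym}^2(V)\arrow\Lambda^2(V)$ is surjective in characteristic $2$). I would therefore keep $G=SL(4)$ but use the deeper twist $f(1)=2$, $f(2)=1$, $f(3)=\infty$, which again gives $\dim X=5$ and Picard number $2$; the anticanonical formula yields $-K_X=8\,\omega_1+4\,\omega_2$, so $X$ is Fano. Take $A$ with $K_X+A=4\,\omega_1$, so $A=12\,\omega_1+4\,\omega_2$ is very ample. Since $\alpha_1$ is now twisted by $p^2=4$, the pushforward along $G/P\arrow G/Q$ is a two-step extension of $k(4\,\omega_1)$ by $k(4\,\omega_1-4\alpha_1)=k(-4\,\omega_1+4\,\omega_2)$, so additivity of Euler characteristics and the Weyl character formula give
$$\chi(X,K_X+A)=\chi(G/B,4\,\omega_1)+\chi(G/B,-4\,\omega_1+4\,\omega_2)=35-45=-10<0,$$
using $s_{\alpha_1}\cdot(-4\,\omega_1+4\,\omega_2)=2\,\omega_1+\omega_2$. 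This already shows Kodaira vanishing fails and gives the ``moreover'' clause for $p=2$ (the negativity also confirms $X$ is not a flag variety).

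The step I expect to be hardest is pinning the $p=2$ failure to degree $1$: to get $H^1\neq 0$ rather than merely some nonzero odd cohomology, I must show $H^i(X,K_X+A)=0$ for $i\geq 2$, equivalently $H^{\geq 2}(G/B,-4\,\omega_1+4\,\omega_2)=0$. Theorem \ref{andersen} does not apply directly now, because $\langle 2\,\omega_1+\omega_2,\alpha_1^\vee\rangle=2$ is not of the form $sp^m-1$. I would instead prove the concentration in degree $1$ by hand, via the $SL_2$-fibration $G/B\arrow G/P_{\alpha_1}$: the weight $-4\,\omega_1+4\,\omega_2$ has degree $-4$ on its $\P^1$-fibers, so $R^0=0$ and $R^1$ is a rank-$3$ bundle, and one is left to check that this bundle has no higher cohomology on $G/P_{\alpha_1}$. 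I expect this last bookkeeping, rather than any conceptual issue, to be the crux.
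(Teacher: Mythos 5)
For $p\geq 3$ your construction is exactly the paper's: the same $X=G/P$ for $SL(p+1)$ with the function $f$ of section \ref{homvar}, the same $A=3p\,\omega_1+\omega_2$ with $K_X+A=p\,\omega_1$, the same four-term exact sequence, and the same ratio $(p-1)/2$ giving $\chi<0$ for $p\geq 5$. For $p=3$ your socle argument (that $L(3\omega_1)=L(\omega_1)^{[1]}$ has dimension $4$ and is killed by $\varphi$, so the image has dimension at most $16<20$) is correct; the paper gets away with something even lighter, namely that the weight $\mu=3\omega_1$ occurs in $H^0(G/B,\mu)$ but not in $H^0(G/B,\mu-\alpha)$, so the two equidimensional modules are not isomorphic and $\varphi$ cannot be surjective. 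You are also right that the naive $n=p+1$ recipe degenerates for $p=2$.

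The $p=2$ case is where there is a genuine gap. You propose a different variety (the deeper twist $f=(2,1,\infty)$ for $SL(4)$) and the weight $K_X+A=4\omega_1$, and your Euler characteristic $35-45=-10$ is correct granting that $\gamma^*\pi_*A$ is an extension of $k(4\omega_1)$ by $k(4\omega_1-4\alpha_1)$ (asserted by analogy with \cite[section 6, Proposition 2]{HL}, which is stated for the shallower twist; this should be justified, though only the set of weights matters for $\chi$). But the theorem asserts $H^1(X,K_X+A)\neq 0$, not merely that some higher cohomology is nonzero, so you must prove $H^{\geq 2}(G/B,-4\omega_1+4\omega_2)=0$ in characteristic $2$ --- and this is precisely the step you leave open. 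It is not bookkeeping: Theorem \ref{andersen} does not apply (as you observe, $\langle 2\omega_1+\omega_2,\alpha_1^\vee\rangle=2$ is not of the form $sp^m-1$), and degree-concentration for non-dominant weights on flag varieties in small characteristic is exactly the kind of statement that can fail; your proposed reduction to a rank-$3$ bundle $R^1\pi_{\alpha_1*}$ on $G/P_{\alpha_1}$ whose $B$-filtration involves the non-dominant weight $-2\omega_1+3\omega_2$ does not obviously close. The paper avoids all of this by keeping the original variety of section \ref{homvar} with $n=4$ and taking $A=6\omega_1+\omega_2$, so that $\mu=K_X+A=2\omega_1-\omega_2$. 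Then $\langle\mu,\alpha_2^\vee\rangle=-1$ kills all cohomology of $\mu$ on $G/B$, and $\mu-2\alpha_1=-2\omega_1+\omega_2=s_{\alpha_1}\cdot 0$ has cohomology only in degree $1$ by Theorem \ref{andersen} applied to the weight $0$ (here $0=1\cdot 2^0-1$ is in range); this gives $\chi(X,\mu)=-1$ and $H^i(X,\mu)=0$ for $i\neq 1$ in one stroke. I would either switch to that choice of $(X,A)$ for $p=2$ or supply a complete proof of the vanishing of $H^{\geq 2}(G/B,-4\omega_1+4\omega_2)$.
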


\begin{proof}
First assume $p\geq 3$. At the end, we will give the example
for $p=2$.

As in the proof of Theorem \ref{van2p+1}, let $X$ be
the homogenous variety over $\F_p$ defined in
section \ref{homvar}, but now with $n$ equal to $p+1$ rather than
$p+2$. Thus $X$ is a smooth projective homogenous variety for $SL(n)$
of dimension $2n-3=2p-1$. The anticanonical bundle of $X$
is 
\begin{align*}
-K_X&=2p\,\omega_1+(n-p)\omega_{2}\\
&=2p\,\omega_1+\omega_{2}.
\end{align*}
Because $2p$ and 1 are positive, $-K_X$ is ample.
(In this case, $-K_X$ is not divisible by 2.)

The Picard group of $X$ has a basis consisting of $p\,\omega_1$
and $\omega_2$. Therefore,
the weight $\lambda:=3p\,\omega_1+\omega_{2}$ corresponds
to another
ample line bundle $A$ on $X$. In fact, $A$ is very ample, as discussed
in section \ref{homvar}.
The weight $\mu:=K_X+\lambda$ is equal to
$p\, \omega_{1}$. Because the simple root $\alpha:=\alpha_{1}=
2\omega_1-\omega_2$
has $\langle
\mu,\alpha^{\vee}\rangle=p$ and $\langle
\mu-\alpha,\alpha^{\vee}\rangle=p-2$ (which is less than $p$),
the same argument
as in the proof of Theorem \ref{van2p+1} gives an exact
sequence of $G$-modules:
$$0\arrow H^0(G/P,K_X+A)\arrow H^0(G/B,\mu)\arrow
H^0(G/B,\mu-\alpha)\arrow H^1(G/P,K_X+A)\arrow 0,$$
and $H^i(G/P,K_X+A)=0$ for $i\geq 2$.

Write $V$ for the $n$-dimensional representation $H^0(G/B,\omega_{1})$
of $G=SL(n)$. Since $\mu=p\, \omega_{1}$, $H^0(G/B,\mu)$
is the symmetric power $S^p(V)$, which has dimension
$\binom{n+p-1}{n-1}=\binom{2p}{p}$. Also, $\mu-\alpha$ is equal to
$(p-2)\omega_1+\omega_{2}$, which is also dominant,
and the Weyl dimension formula gives that $h^0(G/B,
\mu-\alpha)$ is
$\binom{2p-1}{p-1}(p-1)$. It follows that the ratio
$h^0(G/B,\mu-\alpha)/h^0(G/B,\mu)$ is equal to $(p-1)/2$.
Now suppose that $p\geq 5$. Then $H^0(G/B,\mu-\alpha)$ has bigger dimension
than $H^0(G/B,\mu)$. By the exact sequence above,
it follows that the Euler characteristic $\chi(X,K_X+A)$ is negative.
Since $K_X+A$ has no cohomology in degrees at least 2, it follows
that $H^1(X,K_X+A)\neq 0$ for $p\geq 5$, as we want.

Next, let $p=3$. Then $G=SL(4)$, and the $G$-modules $H^0(G/B,\mu)=S^3V$
and $H^0(G/B,\mu-\alpha)$ both have
dimension 20. However, because $\mu>\mu-\alpha$ in the partial order
of the weight lattice
given by the positive roots, the weight $\mu$ occurs
in $H^0(G/B,\mu)$ and not in $H^0(G/B,\mu-\alpha)$.
Therefore,
the $G$-linear map $\varphi\colon H^0(G/B,\mu)
\arrow H^0(G/B,\mu-\alpha)$ is not an isomorphism,
and hence not surjective. By the exact sequence above,
it follows that $H^1(X,K_X+A)\neq 0$.

Finally, let $p=2$. In this case, let $n$ be $p+2=4$ (not $p+1$
as above). Let $X$ be the homogeneous variety
for $SL(n)$ over $\F_2$ described in section \ref{homvar}.
Then $X$ is a smooth Fano variety of dimension $2n-3=5$ and Picard number 2.

The Picard group of $X$ is generated by $p\, \omega_1=2\omega_1$
and $\omega_{2}$. The anticanonical bundle $-K_X$ is
$2p\,\omega_1+(n-p)\omega_{2}=4\omega_1+2\omega_{2}$.
Let $A$ be the ample line bundle $6\omega_1+\omega_{2}$ on $X$.
It is in fact very ample, as discussed in section \ref{homvar}.
Let $\mu=K_X+A=2\omega_1-\omega_{2}$.
Because of the negative coefficient, $H^0(X,\mu)=0$ (for example
by the inclusion $H^0(X,\mu)\subset H^0(G/B,\mu)$ given
by pullback). So Kodaira vanishing would imply that $H^i(X,\mu)=0$
for all $i$.

To disprove that, we compute the Euler characteristic. As in the proof
of Theorem \ref{van2p+1}, $X$ is a $\P^1$-bundle over the Grassmannian
$\Gr(2,n)$.
The degree of a line bundle $\nu$ on the $\P^1$ fibers
is $\langle \nu,\alpha^{\vee}\rangle/p$,
where $\alpha$ is the simple root $\alpha_{1}$
and $p=2$. So $\mu$ has degree
1 on those fibers. As in the proof of Theorem \ref{van2p+1},
it follows that there is a long exact sequence of $G$-modules:
$$\arrow H^i(G/B,\mu-p\alpha)\arrow H^i(X,\mu) \arrow 
H^i(G/B,\mu)\arrow H^{i+1}(G/B,\mu-p\alpha)\arrow $$
Therefore, in terms of Euler characteristics,
$$\chi(X,\mu)=\chi(G/B,\mu)+\chi(G/B,\mu-p\alpha).$$

Here $G=SL(4)$ and $\mu=2\omega_1-\omega_2$, and so
$\mu-p\alpha=-2\omega_1+\omega_2$. The Weyl dimension formula
says that
\begin{multline*}
\chi(G/B,a_1\omega_1+a_2\omega_2+a_3\omega_3)\\
=\frac{(a_1+1)
(a_2+1)(a_3+1)(a_1+a_2+2)(a_2+a_3+2)(a_1+a_2+a_3+3)}{12}.
\end{multline*}
It follows that $\chi(G/B,\mu)=0$ and $\chi(G/B,\mu-p\alpha)=-1$,
and hence $\chi(X,\mu)=-1$. Because this is negative, Kodaira
vanishing fails on the smooth Fano 5-fold $X$ in characteristic 2.

To prove the full statement of Theorem \ref{van2p-1}, we want
to show more specifically that $H^1(X,K_X+A)=H^1(X,\mu)$
is not zero.
It suffices to show that $H^i(X,\mu)=0$ for $i>1$.
By the exact sequence above, this follows if the line bundles
$\mu=2\omega_1-\omega_{2}$ and $\mu-p\alpha=
-2\omega_1+\omega_2$ on $G/B$ have no cohomology in degrees
greater than 1. Because $\langle\mu,\alpha_2^{\vee}
\rangle=-1$ (that is, $\mu$ has degree $-1$ on the fibers
of one of the $\P^1$-fibrations of $G/B$), $\mu$ actually has
no cohomology in any degree \cite[Proposition II.5.4(a)]{Jantzen}.
Next, the trivial line bundle on $G/B$ has cohomology only in degree
zero by Kempf's vanishing theorem, and $s_{\alpha}\cdot 0
= -2\omega_1+\omega_2=\mu-p\alpha$. Beause $\langle 
0,\mu^{\vee}\rangle = 0$ is of the form $sp^m-1$
for some $s,m\in \N$ with $0<s<p$, Theorem \ref{andersen}
gives that
$$H^i(G/B,O)\cong H^{i+1}(G/B,\mu-p\alpha)$$
for all $i$. Thus $\mu-p\alpha$ has cohomology only
in degree 1, as we want.
\end{proof}

\section{Quotient singularities}
\label{quotient}

In this section, we describe Yasuda's examples
of quotient singularities of any characteristic
$p>0$ which are terminal but not Cohen-Macaulay (section
\ref{quotient}). Again, the dimension
increases with $p$, but more slowly than in the examples above.

For $p\geq 5$, his construction is as follows. Let $G$ be the cyclic
group $\Z/p$ and $k$ a field of characteristic $p$. For each
$1\leq n\leq p$, there is a unique indecomposable representation
$V$ of $G$ over $k$ of dimension $n$, with a generator of $G$
acting by a single Jordan block. Assume that 
$n(n-1)/2> p\geq n\geq 4$. By \cite[Proposition 6.9]{Yasuda},
$X:=V/G$ is klt if $n(n-1)/2\geq p$ (and the converse would
follow from resolution of singularities). An extension of the
arguments yields that $X$ is terminal if $n(n-1)/2>p$.
On the other hand, because the fixed point set $V^G$ has dimension 1,
which has codimension at least 3 in $V$, $X$ is not
Cohen-Macaulay, by Fogarty \cite{Fogarty}. By a similar construction (using
decomposable representations of $\Z/p$), Yasuda finds non-Cohen-Macaulay
terminal quotient singularities of dimension 6 in characteristic 2
and of dimension 5 in characteristic 3.

\section{A 3-dimensional terminal singularity that is not Cohen-Macaulay}

Inspired by Yasuda's examples from section \ref{quotient},
we now give the first example of a terminal 3-fold singularity
which is not Cohen-Macaulay. The base field can be taken
to be any field of characteristic 2, say $\F_2$. Write $G_m=A^1-\{0\}$
for the multiplicative group.

\begin{theorem}
\label{dim3}
Let $X$ be the 3-fold $(G_m)^3/(\Z/2)$ over the field
$\F_2$, where the generator
$\sigma$ of $\Z/2$ acts by
$$\sigma(x_1,x_2,x_3)=\bigg( \frac{1}{x_1},\frac{1}{x_2},\frac{1}{x_3}\bigg).$$
Then $X$ is terminal but not Cohen-Macaulay.
\end{theorem}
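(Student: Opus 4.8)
The plan is to analyze $X = (G_m)^3/(\Z/2)$ over $\F_2$ as a quotient of a smooth affine variety by the involution $\sigma(x_i) = 1/x_i$, establishing the two properties separately. First I would record that $(G_m)^3 = \Spec \F_2[x_1^{\pm 1}, x_2^{\pm 1}, x_3^{\pm 1}]$ is smooth, so $X$ has at worst quotient singularities, and the singular locus is the image of the fixed-point scheme of $\sigma$. Over a field of characteristic $2$, the involution has no \emph{reduced} isolated fixed points in the naive sense — the equation $x_i = 1/x_i$ becomes $x_i^2 = 1$, i.e. $(x_i - 1)^2 = 0$, so the fixed locus is non-reduced and supported on the single point $(1,1,1)$. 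This is the essential feature making the example work in characteristic $2$: unlike the tame case, the action is not linearizable as a pseudoreflection-free diagonalizable action, and the quotient acquires a genuinely bad singularity at the image of $(1,1,1)$.

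For the non-Cohen-Macaulay claim, the strategy I would follow mirrors the quotient-singularity argument of section \ref{quotient}: invoke Fogarty's criterion, which says that for a finite group acting on a smooth variety, the quotient fails to be Cohen-Macaulay when the fixed-point locus $V^G$ has codimension at least $3$ while $G$ acts without the cohomological vanishing needed for Cohen-Macaulayness. Here $(G_m)^3$ has dimension $3$, and the relevant fixed locus (the support of the fixed scheme) is the single point $(1,1,1)$, which has codimension $3$. I would compute the local structure at this point by changing coordinates $y_i = x_i - 1$ (or $y_i = x_i + 1$ in characteristic $2$, where these agree), so that $\sigma$ fixes $(0,0,0)$ and acts on the tangent space; the key is to understand the induced action on the local ring $\F_2[[y_1,y_2,y_3]]$ and show the ring of invariants is not Cohen-Macaulay. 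The main obstacle will be making Fogarty's criterion apply cleanly: one must check that the codimension-$3$ condition genuinely forces the failure here, which in characteristic $2$ requires analyzing the non-linear, non-semisimple nature of the $\Z/2$-action on functions rather than appealing to a tame representation-theoretic computation.

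For terminality, I would use the discrepancy computation via a resolution of the quotient singularity. The plan is to exhibit an explicit toric or weighted blowup resolving the image of $(1,1,1)$ and to compute the discrepancies of the exceptional divisors, verifying they are all strictly positive. Alternatively, and perhaps more robustly, I would relate $X$ to the framework of section \ref{quotient}: after the coordinate change the action near the fixed point looks like a unipotent (non-linear) $\Z/2$-action, and one can try to match it to known terminal but non-Cohen-Macaulay wild quotient singularities, using the same discrepancy estimates that establish terminality when $V^G$ has small dimension. The hard part will be the terminality verification, because in characteristic $2$ the standard toric/linear discrepancy calculus does not directly apply to a non-linear action: I expect to need either an explicit resolution with an honest discrepancy computation, or a careful reduction showing that the local analytic type of the singularity coincides with one already known (or computable) to be terminal. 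Establishing that the $\sigma$-action is \emph{not} linearizable — and hence that this genuinely is a new non-linear example rather than a disguised linear quotient — is both the conceptual novelty and the technically delicate point, since the whole interest of the theorem (per the introduction) is that this arises from a non-linear $\Z/2$-action.
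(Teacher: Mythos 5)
Your overall division of labor is right, and the non-Cohen-Macaulay half is essentially the paper's argument: Fogarty's theorem applies directly once you observe that $\Z/2$ acts on the regular scheme $(G_m)^3$ in characteristic $2$ with fixed locus the single point $(1,1,1)$, of codimension $3$. You make this sound more delicate than it is --- no further analysis of the ``non-semisimple action on functions'' is needed; Fogarty's criterion for $\Z/p$ in characteristic $p$ with nonempty fixed locus of codimension at least $3$ gives the failure of Cohen-Macaulayness outright. Also, nothing in the theorem requires proving that the action is not linearizable; that is motivational context from the introduction, not a step in the proof, so you should drop it from your to-do list.

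The genuine gap is the terminality verification, which you explicitly leave as a plan (``I expect to need either an explicit resolution\ldots''). The paper does carry out the explicit equivariant resolution, and three ingredients you do not name are what make it work. First, the criterion of Kir\'aly--L\"utkebohmert: the quotient of a regular scheme by $\Z/p$ is regular precisely when the fixed-point \emph{scheme} is a Cartier divisor. This tells you when to stop blowing up, and it forces two rounds: after blowing up $(1,1,1)$ the fixed-point scheme equals the exceptional $\P^2$ except at $7$ points (the $\F_2$-points of the exceptional divisor, permuted transitively by the $GL(3,\Z)$-symmetry), and only after blowing up those $7$ points does the fixed-point scheme become the Cartier divisor $E_0+2\sum_{j=1}^7 E_j$. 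Second, the discrepancy computation on the quotient cannot be done by any toric calculus, because $f\colon Y_2\arrow Y_2/G$ is ramified along every exceptional divisor (each $E_j$ is fixed by $G$); one needs the Artin and Swan ramification numbers to compute both the ramification divisor ($K_{Y_2}=f^*K_{Y_2/G}+E_0+2\sum_j E_j$) and the ramification indices, distinguishing fierce ramification along $E_0$ (where $f^*F_0=E_0$) from wild ramification along $E_1,\ldots,E_7$ (where $f^*F_j=2E_j$). Third, only after combining these with $K_{Y_2}=\pi^*K_{Y_0}+2E_0+4\sum_j E_j$ does one get $K_{Y_2/G}=\pi^*K_X+F_0+\sum_j F_j$ with all discrepancies equal to $1>0$. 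Without these steps your terminality argument is an intention rather than a proof.
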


\begin{proof}
Let $Y$ be the 3-fold $(G_m)^3$ over the field $k=\F_2$.
Clearly $G=\Z/2$ acts freely
outside the point $(1,1,1)$ in $Y$. By Fogarty, when
the group $G=\Z/p$ acts on a regular scheme $Y$ in characteristic $p$
with fixed point locus nonempty and of codimension at least 3, $Y/G$
is not Cohen-Macaulay \cite{Fogarty}.

It remains to show that $X=Y/G$ is terminal. Most of the work
is to construct an explicit resolution of singularities
of $X$. We do that by performing $G$-equivariant blow-ups
of $Y$ until the quotient variety becomes smooth over $k$.
In characteristic 0, the quotient of a smooth variety by a cyclic group
of prime order
is smooth if and only if the fixed point locus has pure codimension 1.
That fails for actions of $\Z/p$ in characteristic $p$, but there
is a useful substitute by Kir\'aly and L\"utkebohmert \cite[Theorem 2]{KL}:

\begin{theorem}
\label{kl}
Let $G$ by a cyclic group of prime order which acts on a regular scheme
$X$. If the fixed point {\it scheme }$X^G$ is a Cartier divisor in $X$, then
the quotient space $X/G$ is regular.
\end{theorem}

Since we work with varieties
over the perfect field $k=\F_2$, being regular is the same as
being smooth over $k$.

In checking the properties of the blow-up, it is helpful
to observe that the singularity $X=Y/G$ has an enormous
automorphism group. Namely, $GL(3,\Z)$ acts by automorphisms
of the torus $Y=(G_m)^3$, and this commutes with the action
of $G=\Z/2$ (which corresponds to the diagonal matrix $-1$ in $GL(3,\Z)$).
Therefore, $GL(3,\Z)$ acts on $X$ (through its quotient $PGL(3,\Z)$,
clearly). The formal completion of $X$ at its singular point
has an action of an even bigger group, $PGL(3,\Z_2)$.

\begin{remark}
By analogy with the study of infinite
discrete automorphism groups of projective varieties,
this example suggests that it could be interesting to study
profinite groups acting on formal completions of singularities
in characteristic $p$.
\end{remark}

We now begin to blow up. Identify $Y$ with $Y_0=(A^1-\{1\})^3$
over $k$ by $y_i=x_i+1$ for $i=1,2,3$. Then $G$ acts on $Y_0$ by 
$$\sigma(y_1,y_2,y_3)=\bigg(\frac{y_1}{1+y_1},
\frac{y_2}{1+y_2},\frac{y_3}{1+y_3}\bigg).$$
We do this so that the point fixed by $G$ is $(y_1,y_2,y_3)=(0,0,0)$.
Let $Y_1$ be the blow-up of $Y_0$ at this point.
Thus
$$Y_1=\{((y_1,y_2,y_3),[w_1,w_2,w_3])\in Y_0\times\P^2: y_1w_2=y_2w_1,
y_1w_3=y_3w_1,y_2w_3=y_3w_2\}.$$
The group $G$ acts on $Y_1$ by
$$\sigma((y_1,y_2,y_3),[w_1,w_2,w_3])=\bigg(\frac{y_1}{1+y_1},
\frac{y_2}{1+y_2},\frac{y_3}{1+y_3}\bigg),\bigg[\frac{w_1}{1+y_1},
\frac{w_2}{1+y_2},\frac{w_3}{1+y_3}\bigg].$$
Because $GL(3,\Z)$ fixes the origin in $Y_0$, the action
of $GL(3,\Z)$ on $Y_0$ lifts to an action on the blow-up $Y_1$.
(This includes the obvious action of the symmetric group $S_3$.)

To compute the fixed point scheme of $G$ on $Y_1$, work in the open
subset $U_1$ with $w_1=1$; this will suffice, by the $S_3$-symmetry of $Y_1$.
We can view $U_1$ as the open subset of $A^3=\{(y_1,w_2,w_3)\}$
defined by $y_1\neq 1$, $y_1w_2\neq 1$, and $y_1w_3\neq 1$
(using that $y_2=y_1w_2$ and $y_3=y_1w_3$).
The action of $G$ on $U_1$ is given by
$$\sigma(y_1,w_2,y_3)=\bigg(\frac{y_1}{y_1+1},\frac{w_2(y_1+1)}{y_1w_2+1},
\frac{w_3(y_1+1)}{y_1w_3+1}\bigg).$$
So the fixed point scheme $(U_1)^G$ is defined by the equations
$$y_1=\frac{y_1}{y_1+1},\; w_2=\frac{w_2(y_1+1)}{y_1w_2+1}, \;
w_3=\frac{w_3(y_1+1)}{y_1w_3+1}.$$
Equivalently, $y_1^2=0$, $y_1w_2(w_2+1)=0$,
and $y_1w_3(w_3+1)=0$. Thus the scheme $(U_1)^G$ is not a Cartier
divisor: it is equal to the Cartier divisor $y_1=0$ (the exceptional
divisor) except at the points $(y_1,w_2,w_3)=(0,0,0),(0,0,1),(0,1,0)$,
and $(0,1,1)$.

In view of the $S_3$-symmetry of $Y_1$, it follows that the fixed
point scheme $(Y_1)^G$ is equal to the exceptional divisor $E\cong \P^2$
with multiplicity 1
except at 7 points on that divisor, where $(y_1,y_2,y_3)=(0,0,0)$
and $[w_1,w_2,w_3]$ is one of $[1,0,0]$, $[0,1,0]$,
$[0,0,1]$, $[1,1,0]$, $[1,0,1]$, $[1,1,0]$, or $[1,1,1]$.
Note that $GL(3,\Z)$ acts through its quotient group
$GL(3,\F_2)$ on the divisor $E$, and it permutes these 7 points
transitively. Therefore, to resolve the singularities
of $Y_1/G$, it will suffice to blow $Y_1$ up
at the point $[1,0,0]$; the blow-ups at the rest of the 7 points
work exactly the same way.

The point we are blowing up is in the open set
$U_1$ of $Y_1$ defined above, namely $(y_1,w_2,w_3)=(0,0,0)$
in 
$$U_1=\{(y_1,w_2,w_3)\}\in A^3:
y_1\neq 1, \; y_1w_2\neq 1, \text{ and }y_1w_3\neq 1\}.$$
The resulting blow-up $Y_2$ is:
$$\{((y_1,w_2,w_3),[v_1,v_2,v_3])\in U_1\times\P^2: y_1v_2=w_2v_1,
y_1v_3=w_3v_1, w_2v_3=w_3v_2\}.$$
The group $G$ acts on $Y_2$ by:
\begin{multline*}
\sigma((y_1,w_2,w_3),[v_1,v_2,v_3])=\\
\bigg(\frac{y_1}{y_1+1},\frac{w_2(y_1+1)}{y_1w_2+1},
\frac{w_3(y_1+1)}{y_1w_3+1}\bigg),\bigg[\frac{v_1}{y_1+1},
\frac{v_2(y_1+1)}{y_1w_2+1},
\frac{v_3(y_1+1)}{y_1w_3+1}\bigg].
\end{multline*}

To compute the fixed point scheme of $G$ on $Y_2$, work first
in the open set $v_1=1$. In those coordinates, $G$ acts by:
$$\sigma(y_1,v_2,v_3)=\bigg(\frac{y_1}{y_1+1},
\frac{v_2(y_1+1)^2}{y_1^2v_2+1},\frac{v_3(y_1+1)^2}{y_1^2v_3+1}\bigg)$$
(using that $w_2=y_1v_2$ and $w_3=y_1v_3$).
The fixed point scheme of $G$ on this open set is given by the equations
$y_1^2=0$, $y_1^2v_2(v_2+1)=0$, and $y_1^2v_3(v_3+1)=0$,
which just say that $y_1^2=0$. Thus the fixed point scheme $(Y_2)^G$
is a Cartier divisor in this open set: 2 times the new exceptional divisor
$E_1$. 

We next compute the fixed point scheme $(Y_2)^G$ in the open set
$v_2=1$;
we will not need to consider the remaining open set $v_3=1$ separately,
in view of the symmetry between $v_2$ and $v_3$ in the action of $G$ on $Y_2$.
Namely, $G$ acts on the open set $v_2=1$ by
$$\sigma(w_2,v_1,v_3)=\bigg(\frac{w_2(w_2v_1+1)}{w_2^2v_1+1},
\frac{v_1(w_2^2v_1+1)}{(w_2v_1+1)^2},
\frac{v_3(w_2^2v_1+1)}{w_2^2v_1v_3+1}\bigg)$$
(using that $y_1=w_2v_1$ and $w_3=w_2v_3$).
The fixed point scheme of $G$ on this open set is given by the equations
$$w_2^2v_1(w_2+1)=0,\; w_2^2v_1^2(v_1+1)=0,\text{ and }w_2^2v_1v_3(v_3+1)=0.$$
Note that we defined $Y_2$ by blowing up only one of the 7 points
listed earlier in $Y_1$, the one with $(y_1,w_2,w_3)=(0,0,0)$; since
we are not concerned with the other 6 points here, we can assume that
$w_2\neq 1$. Then the first equation defining $(Y_2)^G$
gives that $w_2^2v_1=0$, and that implies the other two equations.
That is, we have shown that $(Y_2)^G$ is a Cartier divisor
in the open set $v_2=1$ near the exceptional divisor $E_1$:
it is $E_0+2E_1$, where $E_0$ is the proper transform
of the exceptional divisor $E$ in $Y_1$ (given by $v_1=0$ in these
coordinates).

By the symmetry between $v_2$ and $v_3$ in the equations
for $Y_2$, the same calculation applies to the open set
$v_3=1$. Thus we have shown that the fixed point scheme
$(Y_2)^G$ is Cartier near the exceptional divisor
$E_1$.

From now on, write $Y_2$ for the blow-up of $Y_1$ at all 7 points
listed above. By the previous calculation together with
the $GL(3,\Z)$-symmetry of $Y_2$, the fixed point scheme
$(Y_2)^G$ is the Cartier divisor
$$E_0+2\sum_{j=1}^7 E_j,$$
where $E_1,\ldots,E_7$ are the 7 exceptional divisors
of $Y_2\arrow Y_1$. By Theorem \ref{kl},
it follows that $Y_2/G$ is smooth over $k$.
Thus $Y_2/G$ is a resolution of singularities of $X=Y_0/G$.

Write $F_0,F_1,\ldots,F_7$ for the images in $Y_2/G$ of
the exceptional divisors $E_0,E_1,\ldots,E_7$. Note that although $G$
fixes each divisor $E_j$ in $Y_2$, the morphism $E_j\arrow F_j$
is a finite purely inseparable morphism, not necessarily an isomorphism.
(Indeed, $G=\Z/2$ is not linearly reductive in characteristic
2. So if $G$ acts on an affine scheme $T$ preserving a closed subscheme $S$,
the morphism $S/G\arrow T/G$ need not be a closed immersion. That is,
the $G$-equivariant surjection $O(T)\arrow O(S)$ need not yield
a surjection $O(T)^G\arrow O(S)^G$.) In any case, our construction
shows that the dual graph of the resolution $Y_2/G\arrow X$ is a star,
with one edge from the vertex $F_0$ to each of the other 7 vertices
$F_1,\ldots,F_7$. 
\begin{center}
\begin{tikzpicture}
\draw (0,0) -- (0,1);
\draw (0,0) -- (0.78183,0.62349);
\draw (0,0) -- (0.97493,-0.22252);
\draw (0,0) -- (0.43389,-0.90097);
\draw (0,0) -- (-0.43389,-0.90097);
\draw (0,0) -- (-0.97493,-0.22252);
\draw (0,0) -- (-0.78183,0.62349);
\draw[fill] (0,0) circle [radius=0.05];
\draw[fill] (0,1) circle [radius=0.05];
\draw[fill] (0.78183,0.62349) circle [radius=0.05];
\draw[fill] (0.97493,-0.22252) circle [radius=0.05];
\draw[fill] (0.43389,-0.90097) circle [radius=0.05];
\draw[fill] (-0.43389,-0.90097) circle [radius=0.05];
\draw[fill] (-0.97493,-0.22252) circle [radius=0.05];
\draw[fill] (-0.78183,0.62349) circle [radius=0.05];
\end{tikzpicture}
\end{center}
This generalizes Artin's observation
that the analogous singularity one dimension lower,
$(G_m)^2/(\Z/2)$ in characteristic 2, is a $D_4$ surface singularity.
That is, the dual graph of its minimal resolution is again a star, with one
central vertex connected to 3 other vertices \cite[p.~64]{Artin}.

The divisor class $K_X$ is Cartier on $X=Y_0/G$, because $G$ preserves
the volume form $(dx_1/x_1)\wedge (dx_2/x_2)\wedge
(dx_3/x_3)$ on the torus $Y_0\cong (G_m)^3$. So we can write
$$K_{Y_2/G}=\pi^*K_X+\sum_{j=0}^7 a_jF_j$$
for some (unique) integers $a_i$. The variety $X$ is terminal if and only
if the {\it discrepancy }$a_i$ is positive
for all $i$ \cite[Corollary 2.12]{Kollarsing}.
Here and below, we write $\pi$ for all the relevant contractions,
which in the formula above means $\pi\colon Y_2/G\arrow Y_0/G=X$.

The analogous formula for $Y_2$ is easy, because $Y_2$ is obtained
from $Y_0$ by blowing up points. First, since $Y_1$
is the blow-up of the smooth 3-fold $Y_0$ at a point,
$$K_{Y_1}=\pi^*K_{Y_0}+2E.$$
Next, $Y_2$ is the blow-up of $Y_1$ at 7 points on the exceptional
divisor $E$, and so
\begin{align*}
K_{Y_2}&=\pi^*K_{Y_1}+2\sum_{j=1}^7 E_i\\
&=\pi^*K_{Y_0}+2E_0+4\sum_{j=1}^7 E_i,
\end{align*}
using that $\pi^*E=E_0+\sum_{j=1}^7 E_i$.

Write $f$ for the quotient map $Y_0\arrow Y_0/G$ or $Y_2\arrow Y_2/G$.
It remains to compute
the {\it ramification index }of each divisor $E_j$ in $Y_2$
(the positive integer
$e_j$ such that $f^*F_j=e_jE_j$) and the coefficient $c_j$ of $E_j$
in the ramification divisor (meaning that $K_{Y_2}=f^*K_{Y_2/G}+
\sum_j c_jE_j$). Another name for $c_j$
is the {\it valuation of the different }$v_L(\mathcal{D}_{L/K})$,
where $L$ is the function field $k(Y_2)$, $K=k(Y_2/G)$, and $v_L$
is the valuation of $L$ associated to the divisor $E_j$.
Here $e_jf_j=2$, where $f_j$ is the degree of the field extension
$k(E_j)$ over $k(F_j)$ (which is purely inseparable in the case
at hand).

We want to compute these numbers without actually finding equations
for the quotient variety $Y_2/G$. This can be done using the
{\it Artin }and {\it Swan ramification numbers} of the $G$-action
on $Y_2$, defined as:
\begin{align*}
i(\sigma)&=\inf_{a\in O_L} v_L(\sigma(a)-a)\\
s(\sigma)&=\inf_{a\in L^*} v_L(\sigma(a)a^{-1}-1).
\end{align*}
Here $O_L$ is the ring of integers of $L=k(Y_2)$ with respect
to the valuation $v_L$ associated to a given divisor $E_j$.
We have already computed $i(\sigma)$ for each $E_j$: it is
the multiplicity of $E_j$ in the fixed point scheme $(Y_2)^G$,
which is 1 for $E_0$ and 2 for $E_j$ with $1\leq j\leq 7$.
Then, more generally for an action of $\Z/p$
on a normal scheme of characteristic $p$ that fixes an irreducible divisor,
we have $s(\sigma)>0$, and either $i(\sigma)=s(\sigma)+1$,
in which case $e=p$ and $f=1$,
or $i(\sigma)=s(\sigma)$,
in which case $e=1$ and $f=p$ \cite[section 2.1]{XZ}.
The first case is called {\it wild }ramification,
and the second is called {\it fierce} ramification.
In both cases, the valuation of the different $v_L(\mathcal{D}_{L/K})$
is equal to $(p-1)i(\sigma)$.

In particular, returning to our example with $p=2$,
we have computed $i(\sigma)$ for each divisor $E_j$
(the multiplicity of $E_j$
in the fixed point scheme $(Y_2)^G$), and (by the formula above
for $v_L(\mathcal{D}_{L/K})$) this computes the ramification
divisor of $f\colon Y_2\arrow Y_2/G$. Namely,
$$K_{Y_2}=f^*K_{Y_2/G}+E_0+2\sum_{j=1}^7 E_j.$$

The next step is to compute the ramification index of $f$
along each exceptional divisor $E_j$.
For $E_0$, we have $i(\sigma)=1$ (the multiplicity of $E_0$
in the fixed point scheme $(Y_2)^G$), and then the results above imply
that $Y_2\arrow Y_2/G$ is fiercely ramified along $E_0$,
and so $s(\sigma)=1$.
In particular, $e_0=1$, meaning that $f^*F_0=E_0$.

For $E_j$ with $1\leq j\leq 7$, we have $i(\sigma)=2$ (the multiplicity
of $E_j$ in the fixed point scheme $(Y_2)^G$), which implies
that $s(\sigma)$ is 1 or 2 by the results above. To resolve this ambiguity,
note that it suffices to compute $s(\sigma)$ for $E_1$, because
the automorphism group $GL(3,\Z)$ of $Y_2$ (commuting with $G$)
permutes $E_1,\ldots,E_7$ transitively. And we showed that $E_1$
is defined in the coordinate chart $v_1=1$
by the equation $y_1=0$,
on which $G$ acts by $\sigma(y_1)=y_1/(y_1+1)$.
So $v_L(\sigma(y_1)y_1^{-1}-1)
=v_L(y_1/(y_1+1))=1$. Since $s(\sigma)=
\inf_{a\in L^*} v_L(\sigma(a)a^{-1}-1)$, it follows that
$s(\sigma)$ is 1, not 2. Thus $Y_2\arrow Y_2/G$ is wildly
(rather than fiercely) ramified along $E_j$ for $1\leq j\leq 7$.
In particular, $f^*F_j=2E_j$.

Thus, we have shown that
$$K_{Y_2}=f^*K_{Y_2/G}+E_0+2\sum_{j=1}^7 E_j$$
and that $f^*F_0=E_0$ and $f^*F_j=2E_j$ for $1\leq j\leq 7$.
Since $f\colon Y_0\arrow Y_0/G$ is \'etale in codimension 1,
we have $K_{Y_0}=f^*K_{Y_0/G}$. It follows that
\begin{align*}
f^*K_{Y_2/G}&=K_{Y_2}-E_0-2\sum_{j=1}^7 E_j\\
&= \bigg(\pi^*K_{Y_0}+2E_0+4\sum_{j=1}^7E_j\bigg)-E_0-2\sum_{j=1}^7 E_j\\
&= \pi^*f^*K_{Y_0/G}+E_0+2\sum_{j=1}^7 E_j\\
&= f^*\bigg(\pi^*K_{Y_0/G}+F_0+\sum_{j=1}^7F_j bigg).
\end{align*}
Therefore, 
$$K_{Y_2/G}=\pi^*K_{Y_0/G}+F_0+\sum_{j=1}^7F_j.$$
Because the coefficient of every exceptional divisor
$F_j$ is positive, and $Y_2/G$
is a resolution of singularities of $Y_0/G$,
$X=Y_0/G$ is terminal.
\end{proof}

\section{Open questions}

One question suggested by these examples is whether,
for each positive integer $n$, there is a number $p_0(n)$
such that Fano varieties of dimension $n$ in characteristic
$p\geq p_0(n)$ satisfy Kodaira vanishing. (One could
ask this for smooth Fanos, or in greater generality.)
This is related to the fundamental
question of whether the smooth Fano varieties
of given dimension
form a bounded family over $\Z$, as they do in characteristic zero
by Koll\'ar-Miyaoka-Mori \cite[Corollary V.2.3]{Kollarrat}.

A related question is whether, for each positive integer $n$,
there is a number $p_0(n)$ such that klt singularities
of dimension $n$ in characteristic $p\geq p_0(n)$ are Cohen-Macaulay.
This was shown by Hacon and Witaszek for $n=3$, although no explicit
value for $p_0(3)$ is known \cite[Theorem 1.1]{HW}.


\small \sc UCLA Mathematics Department, Box 951555,
Los Angeles, CA 90095-1555

totaro@math.ucla.edu
\end{document}